\numberwithin{equation}{section}
\newtheorem{thm}{Theorem}[section]
\newtheorem{lem}{Lemma}[section]
\newcommand{\n}{\nonumber}
\newcommand{\si}{\sigma_R }
\renewcommand{\a}{\alpha}
\renewcommand{\l}{\lambda}
\newcommand{\s}{\sigma}
\renewcommand{\O}{\Omega}
\newcommand{\bb}{\begin{equation}}
\newcommand{\ee}{\end{equation}}
\newcommand{\bq}{\begin{eqnarray}}
\newcommand{\eq}{\end{eqnarray}}
\newcommand{\bqn}{\begin{eqnarray*}}
\newcommand{\eqn}{\end{eqnarray*}}
\newcommand{\R}{\mathbb{R}}
\newcommand{\I}{\infty}
\newcommand{\pd}{\partial}
\renewcommand{\div}{\mathop{\mathrm{div}}}
\newcommand{\curl} {\mathop{\mathrm{curl}}}
\newcommand{\sz}{S_0} 
\begin{document}

\title[Discretely self-similar solutions]{On discretely self-similar solutions of the Euler equations}
\author{Dongho Chae}
\author{Tai-Peng Tsai}
\thanks{This work was initiated when both authors attended a workshop held in
the National Center for Theoretical Sciences, Hsinchu, in December
2012.  We thank R.~Kohn for the reference \cite{EggFon}.  The research
of Chae is supported partially by NRF Grant no. 2006-0093854, while
the research of Tsai is supported in part by a grant from the Natural
Sciences and Engineering Research Council of Canada.}
\address[D. Chae]{Chung-Ang University, Department of Mathematics,
Dongjak-gu Heukseok-ro 84, Seoul 156-756, Republic of Korea}%
\email{dchae@cau.ac.kr}
\address[T.-P. Tsai]{University of British Columbia, Department of Mathematics,
1984 Mathematics Road,
    Vancouver, B.C. V6T 1Z2, Canada} %
\email{ttsai@math.ubc.ca}

\subjclass[2000]{76B03, 35Q31} 
\begin{abstract}
This notes gives several criteria which exclude the existence of
discretely self-similar solutions of the three dimensional
incompressible Euler equations.
\end{abstract}
\maketitle
\section{Introduction}

Let $I =(-\I,0)$ or $I=(0,\I)$ be a time interval.  We are concerned
with the Euler equations for the homogeneous incompressible fluid
flows in $\R^3\times I$,
 \[
\mathrm{(E)}
 \left\{ \aligned
 &\frac{\partial v}{\partial t} +(v\cdot \nabla )v =-\nabla p ,
  \\
 &\quad \div v=0 ,
  \endaligned
  \right.
  \]
where $v=(v_1, v_2, v_3 )$, $v_j =v_j (x, t)$, $j=1,2,3$, is the
velocity of the flow, and $p=p(x,t)$ is the scalar pressure.  It is
called \textit{backward} or \textit{forward} depending on whether
$I= (-\infty,0)$ or $I=(0, \infty)$.  Thanks to the time reversal
symmetry of the Euler equations there is a one to one correspondence
between backward and forward solutions, and we may only consider the
backward case $I=(-\infty,0)$.

Recall that the system (E) has the scaling property that if $(v, p)$
is a solution of the system (E), then for any $\lambda >0$ and $\alpha
\in \R $ the functions
\begin{equation}
\label{1.1}
v^{\lambda, \alpha}(x,t)=\lambda ^\alpha v (\lambda x, \l^{\a +1}
t),\quad p^{\l, \a}(x,t)=\l^{2\a}p(\l x, \l^{\a+1} t )
\end{equation}
are also solutions of (E). One can also include space-time translation
in \eqref{1.1}, but we omit it for simplicity.  We say that a solution
$(v,p)$ of (E) is \textit{self-similar} (SS) with respect to the
space-time origin $(0,0)$ if there exists $\alpha\in (-1,\I)$ such
that, for all $\l>0$,
\begin{equation}
\label{1.2a}
v^{\lambda,  \alpha}(x,t) = v(x,t),\quad p^{\l, \a}(x,t)=p(x,t) , \quad
(x,t)\in \R^3\times I.
\end{equation}
It follows that $v(x,t) = \frac 1 {|t|^{a}} V(\frac {x}{|t|^b})$ for
 $V(y)=v(y,\text{sign } t)$ and
\begin{equation}
\label{ab.def}
a= \frac {\a}{\a+1},\quad
b=\frac {1}{\a+1}, \quad \a > -1.
\end{equation}
The condition $\alpha>-1$ ensures that the solution concentrates at
the origin as $t \to 0$.  If a solution satisfies \eqref{1.2a} for one
single $\lambda>1$, we say it is \textit{discretely self-similar} (DSS)
with \textit{factor} $\lambda$. It does not need to satisfy \eqref{1.2a}
for every $\lambda$, and a self-similar solution is considered as a
special case. The possibilities of self-similar singularities in the
Euler equations are studied in \cite{Chae04, Chae07a, Chae07b, Chae10,
  Chae11, ChaKanLee, ChaShv}.  The existence of DSS solutions of (E)
has not been studied, and is the main concern of this note.


Our analysis is based on the self-similar transform.  The self-similar
transform with respect to $(0, 0)$ is the map $(v,p) \mapsto (V,P)$
given by
\begin{equation}
\label{1.4}
 v(x, t)=\frac 1{(-t)^{a}} V(y,s), \quad
 p(x,t)=\frac{1}{b(-t)^{2a}} P(y,s),
\end{equation}
where $a\in \R$ and $b>0$ are given by \eqref{ab.def}, and
\begin{equation}
\label{1.5} y = (-t)^{-b} x, \quad s = - \log (-t).
\end{equation}
Substituting (\ref{1.4})--(\ref{1.5}) into (E), we obtain the
following system for $(V,P)$:
\begin{equation}\label{dse} \left\{
\aligned
&\frac {\partial V}{\partial s}+ \frac {\a}{\a+1} V +\frac {1}{\a+1}(y \cdot \nabla)V + (V\cdot \nabla )V =-\nabla
P,\\
& \mathrm{div}\,V=0.
\endaligned \right.
\end{equation}
A solution $(v,p)$ of (E) is self-similar if and only if $(V,P)$ is
independent of $s$.  A solution $(v,p)$ of (E) is discretely
self-similar with factor $\l>1$ if and only if
\begin{equation}
\label{1.6}
V(y,s)= V(y,s+{\sz}), \quad \forall (y,s)\in \R^{3+1}
\end{equation}
where ${\sz}=(\alpha+1)\log \lambda>0$. In other words, $(V, P)$ is a
time periodic solution of (\ref{dse}) with period ${\sz}$.

Discretely self-similar solutions of partial differential equations
have been considered in many other contexts such as the cosmology.
See the review \cite[Section 5]{EggFon} and the references therein.

We now sketch the structure of the rest of the paper.  In Subsection
\ref{sec:1-1} we review related results for Navier-Stokes equations.
In Section \ref{sec:2} we give nonexistence criteria based on
vorticity integrability, and we will state and prove Theorems
\ref{th:1} and \ref{th:2}.  In Section \ref{sec:3} we give
nonexistence criteria based on velocity integrability, and we will
state and prove Theorems \ref{th:3} and \ref{th:4}.

\subsection{Related results for Navier-Stokes equations}
\label{sec:1-1}
For comparison, we review related results for Navier-Stokes equations
(NS) for which we add $\Delta v$ to the right side of (E). For (NS),
the backward and forward cases are very different.  Introduce the
similarity variables: We take parameter $a<0$ for backward case and
$a>0$ for forward case, and let
\begin{equation}
\label{SS.formula}
v(x,t)= \frac 1{\sqrt{2at}} V(y, s), \quad p(x,t)= \frac 1{{2at}} P
(y, s),
\end{equation}
where
\begin{equation}
y =  \frac x{\sqrt{2at}}, \quad s = \frac 1{2a}\log(2at).
\end{equation}
The corresponding time-dependent \textit{Leray's equations} for $(V,P)$ read
\begin{equation}
\frac {\partial V}{\partial s}-\Delta U -a V - ay \cdot \nabla V +
(V\cdot \nabla )V =-\nabla P, \quad \div V=0.
\end{equation}

For the forward case $a>0$, one can consider the Cauchy problem for
(NS) with initial data $v_0(x)$ which is also SS or DSS, i.e., it
satisfies \eqref{1.2a} with no time dependence. For small data, the
unique existence of small mild solutions by \cite{GM, CMP, CP} implies
those with SS or DSS data are also SS or DSS. For large SS data, the
corresponding SS solution has recently been constructed by
\cite{JiaSve}, and extended to large DSS data by \cite{Tsai12} if the
DSS-factor $\l$ is sufficiently close to $1$ according to the size of
the data.

For the backward case $a<0$, the existence question of self-similar
solutions was raised by Leray \cite{Leray}. It was excluded if $V \in
L^3(\R^3)$ by \cite{NRS}, and if $V \in L^q(\R^3)$, $3<q\le \I$, by
\cite{Tsai98}. Further extensions were given in \cite{Chae06, Chae07a,
  Chae10, ChaKanLee}. The existence of backward DSS solutions has not
been addressed in literature, except that if $V \in L^\infty
(\R,L^3(\R^3))$, which is equivalent to $v\in L^\infty
((-\I,0),L^3(\R^3))$, it must be zero by the result of
\cite{ESS}. Thus one is concerned, e.g., if $V$ only has the bound
\begin{equation}
|V(y,s)| \le \frac {C_*}{1+|y|}, \quad \forall (y,s)\in \R^{3+1},
\end{equation}
for some large constant $C_*$.  A special case that $V(y,s)=R(s\vec
k)\tilde V(y)$, with $R(s\vec k)$ being the rotation about a fixed
axis $\vec k$ by angle $s|\vec k|$, was proposed by G.~Perelman.%
\footnote{private communication of G.~Seregin.} Then $\tilde V$ satisfies a
time-independent system. As pointed out to one of us by R.~Kohn, the
examples of Scheffer \cite{Sch85} are DSS solutions with singular DSS
forces. In view of the forward case, one may hope that the case with
the DSS-factor $\l$ sufficiently close to $1$ might be easier to
exclude.

\section{Criteria based on vorticity}
\label{sec:2}
In this section we give nonexistence criteria based on vorticity
integrability.

\begin{thm}
\label{th:1}
Let $V(y,s)\in C^2_x C_t^1 (\R^{3+1})$ be a time
periodic solution of \eqref{dse} with period ${\sz}>0$ that has
bounded first derivatives and satisfies
\begin{equation}\label{cond1}
\lim_{|y|\to \infty}  V(y,s)=0,\quad \forall s\in [0,\sz),
\end{equation}
and for some $r>0$,
\begin{equation} \label{cond2}
\O:=\curl V\in 
\bigcap_{0<q<r} L^q (\R^3\times [0,\sz]).
\end{equation}
Then $V=0$ on $\R^{3+1}$.
\end{thm}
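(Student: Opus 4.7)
The plan is to take the curl of \eqref{dse}, derive a vorticity equation, combine an $L^q$ energy estimate with time-periodicity, and send $q\to 0^+$ to force $\O\equiv 0$; a Liouville argument will then give $V\equiv 0$.

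Applying $\curl$ to \eqref{dse} and using $\curl((y\cdot\nabla)V)=(y\cdot\nabla)\O+\O$ together with $\curl((V\cdot\nabla)V)=(V\cdot\nabla)\O-(\O\cdot\nabla)V$ (since $\div V=0$), and noting that $\a/(\a+1)+1/(\a+1)=1$, one obtains
\begin{equation*}
\pd_s\O+\O+W\cdot\nabla\O=(\O\cdot\nabla)V,\qquad W:=V+\frac{y}{\a+1},
\end{equation*}
so that $\div W=3/(\a+1)>0$. For $q\in(0,r)$, I would multiply by $q|\O|^{q-2}\O$ (with the standard $(|\O|^2+\vare^2)^{q/2}$ regularisation when $q<2$) and integrate over $\R^3$ to obtain the identity
\begin{equation*}
\frac{d}{ds}\!\int |\O|^q\,dy+\Bigl(q-\tfrac{3}{\a+1}\Bigr)\!\int |\O|^q\,dy=q\!\int |\O|^{q-2}\O\cdot(\O\cdot\nabla)V\,dy,
\end{equation*}
where the transport term contributes $-\tfrac{3}{\a+1}\int|\O|^q\,dy$ after integration by parts and the right-hand side is bounded in modulus by $qM\int|\O|^q\,dy$ with $M:=\|\nabla V\|_\I$.

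To justify the integration by parts against the linearly growing field $W$, I would work on a ball $B_R$ and use that the radial function $h(R):=\int_0^{\sz}\!\int_{\pd B_R}|\O|^q\,dS\,ds$ satisfies $\int_0^\I h(R)\,dR=\|\O\|_{L^q(\R^3\times[0,\sz])}^q<\I$; a Chebyshev-type argument then yields $R_n\to\I$ with $R_n h(R_n)\to 0$, killing the boundary flux, which is dominated by $(\|V\|_\I+\tfrac{R_n}{\a+1})h(R_n)$. Integrating the displayed ODE over one period $[0,\sz]$ cancels the time derivative by periodicity and gives
\begin{equation*}
\Bigl(\tfrac{3}{\a+1}-q(1+M)\Bigr)\!\int_0^{\sz}\!\!\int_{\R^3}|\O|^q\,dy\,ds\le 0.
\end{equation*}
Choosing $q<\min\{r,\,3/((\a+1)(1+M))\}$ makes the prefactor strictly positive, forcing $\O\equiv 0$ on $\R^3\times[0,\sz]$. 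Then $\curl V=0$ and $\div V=0$ yield $\Delta V=0$ componentwise; since $V$ is bounded on $\R^{3+1}$ (from the bounded first derivatives and pointwise-in-$s$ decay at infinity) and vanishes as $|y|\to\I$, Liouville forces $V\equiv 0$.

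The main obstacle is the integration by parts against the \emph{linearly growing} similarity drift $y/(\a+1)$, which is not automatic from mere $L^q$ integrability of $\O$; the Chebyshev selection of radii above is the essential device. The hypothesis $\O\in\bigcap_{0<q<r}L^q$ is used in a crucial way, since one must be free to send $q\to 0^+$ in order to overcome the stretching constant $1+M$ and close the estimate.
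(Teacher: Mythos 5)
Your proposal is correct and takes essentially the same route as the paper: apply $\curl$ to \eqref{dse}, test the vorticity equation with $|\Omega|^{q-2}\Omega$, use time-periodicity to eliminate $\partial_s$, let the divergence of the drift $V+\frac{y}{\alpha+1}$ produce the coefficient $\frac{3}{(\alpha+1)q}$, take $q$ small enough to dominate the stretching term bounded by $\|\nabla V\|_{L^\infty}$, and conclude with the Liouville theorem. The only technical difference is the treatment of spatial infinity: the paper multiplies by a cutoff $\sigma_R$ and uses the linear growth bound $|V(y,s)|\le C_1(1+|y|)$ (which is what the hypotheses actually give, so your appeal to $\|V\|_{L^\infty}$ should be replaced by this bound, which works equally well), whereas you integrate over balls and pick radii $R_n$ with $R_n h(R_n)\to 0$ by a Chebyshev selection; both devices kill the boundary/cutoff term using $\Omega\in L^q$.
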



\begin{proof}
We first observe that from the calculus identity
 \[ V(y,s)=V(0,s)+\int_0 ^1\partial_\tau V(\tau y,s) d\tau=V(0,s)+\int_0 ^1 y\cdot
 \nabla V(\tau y,s) d\tau,
 \]
 we have
 $|V(y,s)|\leq |V(0,s)|+ |y|\|\nabla V(s)\|_{L^\infty}$,
and hence
\begin{equation}\label{cal}
\sup_{(y,s)\in \R^{3+1}} \frac{|V(y,s)|}{1+|y|} \leq C_1:= \max_s
|V(0,s)|+ \|\nabla V\|_{L^\infty(\R^{3+1})}.
\end{equation}

Let us consider the radial cut-off function $\sigma\in C_0
^\infty(\R^N)$ such that
\begin{equation}\label{16}
   \sigma(|x|)=\left\{ \aligned
                  &1 \quad\mbox{if $|x|<1$},\\
                     &0 \quad\mbox{if $|x|>2$},
                      \endaligned \right.
\end{equation}
and $0\leq \sigma (x)\leq 1$ for $1<|x|<2$.  Then, for each $R >0$, we
define $\s_R (y):= \s \left(\frac{|y|}{R}\right)\in C_0 ^\infty
(\R^N).  $ We operate curl on (\ref{dse}),
\begin{equation}\label{vor}
\frac{\pd}{\pd s} \O+\O+ \frac{1}{\alpha+1}( y\cdot \nabla ) \O+(V\cdot \nabla
 )\O=(\O\cdot \nabla )V .
\end{equation}
We multiply (\ref{vor}) by $|\O|^{q-2}\O \si $, and integrate over
$\R^3 \times (0, {\sz})$. The first term vanishes by periodicity.
After integration by parts we get
\begin{align}\label{sel}
&(1 -\frac{3}{(\alpha+1)q}) \int_0 ^{{\sz}} \int |\Omega|^q \si\, dy\,ds
  -\int_0 ^{{\sz}}\int_{\R^3} \xi\cdot \nabla V\cdot \xi |\O(s)|^q
  \si\, dy\,ds\n \\ &=I:= \frac 1q \int_0 ^{{\sz}}\int_{\R^3} |\O(s)|^q (\frac
  y{\a+1}+V)\cdot\nabla \si \, dy\,ds
\end{align}
where we set $\xi=\O/|\O|$.  Since $|\nabla \si(y)|\le \frac
{\|\sigma'\|_{L^\infty}}R 1_{R \le |y| \le 2R}$, by \eqref{cal} we
have
\begin{equation}
|I| \le C (1+C_1) \int_0 ^{{\sz}}\int_{\{R\leq |y|\leq 2R\}} |\O|^q \,
dy\,ds.
\end{equation}
Passing $R\to \infty$, we have $I \to 0$
and (\ref{sel}) gives
\begin{equation}\label{sel2}
\left|1-\frac{3}{(\alpha+1)q}\right|\cdot \int_0
^{\sz}\|\O(s)\|_{L^q}^q ds \le C_1\int_0 ^{\sz}\|\O(s)\|_{L^q}^qds.
\end{equation}
This is true for all $q\in (0, r)$ for some $r>0$. Passing
$q\downarrow 0$ in (\ref{sel2}), we get $\int_0
^{\sz}\|\O(s)\|_{L^q}^qds=0$.  Therefore $\O=\mathrm{curl}\, V=0$ on
$\R^3\times (0, {\sz})$. This, together with $\mathrm{div}\,V=0$,
provides us with the fact that $V (\cdot ,s)=\nabla h(\cdot, s)$ for
all $s\in [0, {\sz})$ for a scalar harmonic function $h(\cdot, s)$ on
  $\R^3$. Using \eqref{cond1} we have $V(\cdot,s)=0$ by Liouville
  theorem for harmonic functions.
\end{proof}

\noindent{\em Remark after the proof: } The above proof works for more
general system
\begin{equation}
\left\{ \aligned &V_s+ a V +b (y \cdot \nabla)V + (V\cdot \nabla )V
=-\nabla P,\\ & \mathrm{div}\,V=0, \endaligned \right.
\end{equation}
where $a,b$ are arbitrary real constants with $b\neq 0$.

\begin{thm}
\label{th:2}
Let $V(y,s)\in C^2_x C_t^1 (\R^{3+1})$ be a time periodic solution of
\eqref{dse} with period ${\sz}>0$ that has bounded first derivatives
satisfying
\begin{equation}\label{cond3}
\lim_{|y|\to \infty} \sup_{0 <s <\sz} |V(y,s)| + | \nabla V(y,s)| =0,
\end{equation}
and  there exists $q\in (0, \frac{3}{1+\a})$ such that
\begin{equation}\label{cond4}
 \O\in L^q (\R^3\times [0,\sz]).
\end{equation}
Then, $V=0$ on $\R^{3+1}$.
\end{thm}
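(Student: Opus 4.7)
The plan is to imitate the multiplier proof of Theorem~\ref{th:1}, this time replacing its ``send $q\downarrow 0$'' device by a direct use of the decay hypothesis \eqref{cond3}.

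First I multiply the vorticity equation \eqref{vor} by $|\O|^{q-2}\O\,\si$ and integrate on $\R^{3}\times[0,\sz]$. Periodicity kills the $\partial_{s}$ term; integration by parts using $\div V=0$ reproduces identity \eqref{sel} verbatim. The shell term $I$ is controlled by $C\int_{0}^{\sz}\!\int_{R\le|y|\le 2R}|\O|^{q}$, which vanishes as $R\to\infty$ by \eqref{cond4}, and dominated convergence (using $\|\nabla V\|_{\infty}<\infty$ together with \eqref{cond4}) permits passing to the limit in the two main terms. Because $q\in(0,\tfrac{3}{1+\a})$ the constant $\kappa:=\tfrac{3}{(\a+1)q}-1$ is strictly positive, and the resulting limit identity reads
\begin{equation}\label{plan-id}
\kappa\int_{0}^{\sz}\!\!\int_{\R^{3}}|\O|^{q}\,dy\,ds \;=\; -\int_{0}^{\sz}\!\!\int_{\R^{3}}\xi\cdot\nabla V\cdot\xi\,|\O|^{q}\,dy\,ds,\qquad \xi=\O/|\O|.
\end{equation}

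To exploit \eqref{cond3} I then localize away from the origin. Given $\eta\in(0,\kappa)$, pick $M=M(\eta)$ so large that $|\nabla V(y,s)|\le\eta$ whenever $|y|\ge M$ and $s\in[0,\sz]$, and rerun the multiplier computation with the modified test function $|\O|^{q-2}\O\,(1-\s(|y|/M))\si$, supported in the annulus $\{M\le|y|\le 2R\}$. On this support $|\xi\cdot\nabla V\cdot\xi|\le\eta$, so the vortex-stretching term is absorbed into the leading coefficient. The extra boundary contributions created by $\nabla\s(|y|/M)$ lie in the shell $\{M\le|y|\le 2M\}$ and are bounded by $C(1+\|V\|_{\infty}/M)\int_{0}^{\sz}\!\int_{M\le|y|\le 2M}|\O|^{q}$. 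After sending $R\to\infty$ this produces
\[
(\kappa-\eta)\int_{0}^{\sz}\!\!\int_{|y|\ge 2M}|\O|^{q}\,dy\,ds \;\le\; C \int_{0}^{\sz}\!\!\int_{M\le|y|\le 2M}|\O|^{q}\,dy\,ds,
\]
so that the tail $g(M):=\int_{0}^{\sz}\!\!\int_{|y|\ge M}|\O|^{q}\,dy\,ds$ obeys the geometric contraction $g(2M)\le\theta\,g(M)$ with $\theta=\tfrac{C}{C+\kappa-\eta}<1$. Once $\O\equiv 0$ is in hand, the pair $\curl V=\div V=0$ together with $V\to 0$ from \eqref{cond3} forces $V\equiv 0$ via Liouville's theorem for harmonic functions, exactly as at the close of Theorem~\ref{th:1}.

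The step I expect to be the hardest is upgrading the geometric contraction to $g\equiv 0$. By itself the contraction only yields polynomial tail decay $g(M)\lesssim M^{\log_{2}\theta}$, which is consistent with, but does not imply, the vanishing of $\O$. The final push will therefore have to combine the contraction with the global identity \eqref{plan-id}---for instance by iterating with $\eta\downarrow 0$ (so that $\theta$ stays strictly below $1$ while $M\to\infty$), or by using the extra integrability $\O\in L^{p}$ for $p\ge q$ provided by the $L^{q}\cap L^{\infty}$ interpolation---in order to cascade the localized smallness on $\{|y|\ge 2M\}$ back onto $\{|y|\le M\}$ and thereby force $g$ all the way to zero.
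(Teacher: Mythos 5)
Your localization is the right instinct, but the way you close it loses exactly the structure the paper exploits, and the gap you flag at the end is genuine and is not repaired by the fixes you suggest. When you replace the cutoff by $(1-\sigma(|y|/M))\sigma_R$ and bound the new shell term by its absolute value $C\int_0^{\sz}\!\int_{M\le|y|\le 2M}|\Omega|^q$, you throw away its sign: choosing $\sigma$ radially non-increasing, the radial drift contributes $-\frac1q\int|\Omega|^q\,\frac{y}{\alpha+1}\cdot\nabla\bigl(1-\sigma(|y|/M)\bigr)\le 0$, and for $M$ large the $V$-part is dominated by it because $\sup_{|y|\ge M}|V|\le \frac{M}{2(\alpha+1)}$ by \eqref{cond3}; so the whole shell term is favorable and can simply be dropped, giving $(\kappa-\eta)\int_0^{\sz}\!\int_{|y|\ge 2M}|\Omega|^q\le 0$, i.e.\ $\Omega=0$ outside a ball, with no contraction or iteration needed. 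This is precisely what the paper does, in sharp-cutoff form: it integrates the divergence-form identity \eqref{vor1} over the exterior annulus $\{R<|y|<R_j\}\times(0,\sz)$, where $R_j$ is a sequence (extracted from \eqref{cond4} via spherical coordinates) along which $R_j\int_0^{\sz}\!\int_{|y|=R_j}|\Omega|^q\,dS\to 0$; the inner boundary then produces the term $\frac{R}{q(\alpha+1)}\int_0^{\sz}\!\int_{|y|=R}|\Omega|^q\,dS$ with a positive coefficient which, by \eqref{cond3}, dominates the transport flux $\frac1q\int_0^{\sz}\!\int_{|y|=R}|V_r|\,|\Omega|^q\,dS$, while $|\hat{\alpha}|\le\frac12\bigl(\frac{3}{q(\alpha+1)}-1\bigr)$ absorbs the stretching term; see \eqref{eq2.14}. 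Two nonnegative quantities then sum to something nonpositive, so both vanish. Your crude bound converts this into $g(2M)\le\theta g(M)$, which, as you yourself note, only yields polynomial tail decay; neither sending $\eta\downarrow 0$ (the constant $C$, hence $\theta$, does not improve) nor $L^q$--$L^\infty$ interpolation can upgrade that to $g\equiv 0$.

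There is a second gap at the end: even once $\Omega=0$ on $\{|y|>R\}$ is established, your plan jumps to ``$\Omega\equiv 0$, then Liouville.'' No absorption argument can work inside the ball, where $|\nabla V|$ is only bounded, not small, so neither your localized inequality nor your global identity can force the interior part of $\Omega$ to vanish. The paper instead observes that the vorticity, now compactly supported and bounded (since $\nabla V\in L^\infty$), satisfies hypothesis \eqref{cond2} of Theorem \ref{th:1} for all small exponents, and invokes Theorem \ref{th:1} --- whose $q\downarrow 0$ limit is what kills the interior contribution --- before concluding $V=0$ from $\curl V=\div V=0$ and decay. You need this reduction (or some substitute for it) as the final step.
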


\begin{proof}
Writing (\ref{cond4}) in terms of spherical coordinates,
\[
\int_0^{{\sz}}\int_{\R^3}|\O|^q dy\,ds =\int_{0} ^\infty \int_0
^{{\sz}}\int_{|y|=r} |\O|^q\, dS_r\, ds\,dr < \infty ,
\]
one finds that there exists a sequence $R_j\uparrow \infty$ such that
\begin{equation}
R_j \int_0 ^{{\sz}}\int_{|y|=R_j} |\O |^q dS_{R_j} ds\to 0 \quad
\mbox{as }j\to \infty.
\end{equation}
 We multiply (\ref{vor}) by $\O|\O|^{q-2}$ and rewrite it
\begin{eqnarray}\label{vor1}
 &&\frac{1}{q} \frac{\partial}{\partial s}|\O|^q +|\O|^q+\frac{1}{q(\a +1)}\mathrm{ div }\, (y|\O|^q) - \frac{3}{q(\a +1)}|\O|^q \n \\
 &&= \hat{\alpha} |\O|^{q}-\frac{1}{q} \mathrm{div} \,(V |\O|^q) ,
\end{eqnarray}
where $\hat{\alpha} = \xi \cdot \nabla V \cdot\xi $ with
$\xi=\O/|\O|$. Note that $|\hat{\alpha}|\leq |\nabla V|$.  Let us fix
an $R>0$ and integrate (\ref{vor1}) over the domain $(y,s)\in \{
R<|y|< R_j \}\times(0, {\sz})$. Applying the divergence theorem, we
have
\begin{multline*}
\left(\frac{3}{q(\a+1)} -1\right)\int_0 ^{{\sz}} \int_{R<|y|<R_j} |\O|^q dy\,ds
  +\frac{R}{q(\a+1)}\int_0 ^{{\sz}} \int_{|y|=R} |\O|^q dS_Rds\\
- \frac{R_j}{q(\a+1)} \int_{|y|=R_j} |\O|^q
  dS_{R_j}ds\\
=-\int_0 ^{{\sz}}\int_{R<|y| <R_j} \hat{\alpha} |\O|^q dy\,ds
- \frac{1}{q}\int_0 ^{{\sz}}\int_{|y|=R} V_r|\O|^qdS_R ds +\frac{1}{q}\int_0 ^{{\sz}}\int_{|y|=R_j} V_r |\O|^q dS_{R_j}ds,
\end{multline*}
where $V_r= V\cdot \frac y{|y|}$ and we used the fact $\int_0 ^{{\sz}}
(\|\O(s)\|_{L^q}^q)_s ds=0$, following from the periodicity
hypothesis.  Passing $j\to \infty$, one obtains
 \begin{eqnarray}
 &&\left(\frac{3}{q(\a+1)} -1\right)\int_0 ^{{\sz}} \int_{|y|>R} |\O|^q dy\,ds
  +\frac{R}{q(\a+1)} \int_0 ^{{\sz}}\int_{|y|=R} |\O|^q dS_Rds \nonumber \\
  &&=-\int_0 ^{{\sz}}\int_{|y| >R} \hat{\alpha} |\O|^q dy\,ds- \frac{1}{q}\int_0 ^{{\sz}}\int_{|y|=R} V_r|\O|^qdS_Rds. \label{eq2.14}
\end{eqnarray}
By using \eqref{cond3} and choosing $R$ sufficiently large, we have
\[
|\hat{\alpha}| \le  \frac12 \left( \frac{3}{q(\a+1)} -1\right), \quad
|V_r| \le \frac R{2(\alpha+1)},
\]
on the right side of \eqref{eq2.14}.
Consequently,
 \[
\int_0 ^{{\sz}}\int_{|y|>R} |\O|^q dy\,ds = \int_0 ^{{\sz}}\int_{|y|=R} |\O|^p dS_Rds =0,
\] and hence, $\O=0$ on $\{ y\in \R^3\, |\, |y|>R\}\times (0, {\sz})$.
Thus our vorticity $\O$ satisfies the condition (\ref{cond2}) of
Theorem \ref{th:1}. Applying Theorem \ref{th:1} we obtain $V=0$ on
$\R^{3+1}$.
\end{proof}

\section{Criteria based on velocity}
\label{sec:3}
In this section we give nonexistence criteria based on velocity
integrability.

We will need to estimate the pressure $P(y,s)$, which satisfies
\begin{equation}
\label{P.eq}
-\Delta_y P(\cdot,s) = \sum_{i,j} \pd_i \pd_j (V_iV_j(\cdot,s))
\end{equation}
by taking the divergence of \eqref{dse}.
One solution of \eqref{P.eq} is given by
%
\begin{equation}
\label{P.formula}
\tilde P(y,s) = - \frac {|v(y,s)|^2}3 +\sum_{i,j} p.v.\!\int_{\R^3}
K_{ij}(y-z)V_iV_j(z,s)dz,
\end{equation}
where the kernel is
\[
K_{ij}(y) = \frac {3y_iy_j - \delta_{i,j} |y|^2}{4\pi |y|^5}.
\]
In general, for each fixed $t$, the difference $P -\tilde P$ is a
harmonic function in $x$ and may not be constant.
We will assume $P=\tilde P$.

\begin{thm}
\label{th:3}
Suppose that $(V,P)\in C^1_{loc} (\R^{3+1})$ is a time periodic
solution of (\ref{dse}) with period ${\sz}>0$, that the pressure $P$
is given by \eqref{P.formula}, and that $V$ satisfies for some $3\le r
\le 9/2$ one of the the following conditions,
\begin{itemize}
\item[(i)] $\alpha >3/2$ and $V\in L^{3}(0, {\sz}; L^{r} (\R^3))$, or
\item[(ii)] $-1<\alpha<3/2$ and $V\in L^2 (0, {\sz}; L^2 (\R^3))\cap L^{3}(0, {\sz}; L^{r} (\R^3))$.
    \end{itemize}
Then $V=0$ on $\R^{3+1}$.
\end{thm}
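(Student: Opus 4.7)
The strategy is a localized energy identity in self-similar variables. Dotting \eqref{dse} with $V\si$ and integrating over $\R^3\times(0,\sz)$, the $\pd_s$-term vanishes by time periodicity, the convective term is integrated by parts using $\div V=0$, and the scaling term $\frac{1}{\a+1}(y\cdot\nabla)V$ together with $-\nabla P$ each contribute a bulk $|V|^2$ piece plus a boundary piece supported on $\nabla\si$. The identity reads
\begin{equation}
\label{pr.id}
\frac{2\a-3}{2(\a+1)} \int_0^\sz\! \int_{\R^3} |V|^2 \si\, dy\,ds
= \frac{1}{2(\a+1)} J_1(R) + \frac{1}{2} J_2(R) + J_3(R),
\end{equation}
where $J_1(R)=\int_0^\sz\!\int_{\R^3} |V|^2\,y\cdot\nabla\si\,dy\,ds$, $J_2(R)=\int_0^\sz\!\int_{\R^3}|V|^2\,V\cdot\nabla\si\,dy\,ds$, and $J_3(R)=\int_0^\sz\!\int_{\R^3} P\,V\cdot\nabla\si\,dy\,ds$, each supported in the annulus $A_R=\{R\le|y|\le 2R\}$ where $|\nabla\si|\le C/R$. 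Because $\sigma$ is radially decreasing, $y\cdot\nabla\si\le 0$ pointwise, so $J_1(R)\le 0$ always; the sign of the LHS coefficient $(2\a-3)/(2(\a+1))$ then exactly separates the two cases of the theorem.

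The technical core is to show $J_2(R),J_3(R)\to 0$ as $R\to\infty$ under $V\in L^3(0,\sz; L^r(\R^3))$ with $3\le r\le 9/2$. For $J_2$, spatial H\"older with exponents $r/3$ and $r/(r-3)$ gives $\int_{A_R}|V|^3\,dy \le \|V\|_{L^r(A_R)}^3\,|A_R|^{(r-3)/r}$, and H\"older in time then yields $|J_2(R)| \le C R^{2-9/r}\,\|V\|_{L^3L^r}^2\,(\int_0^\sz\|V\|_{L^r(A_R)}^3\,ds)^{1/3}$. For $J_3$ I would invoke the Calder\'on--Zygmund bound $\|P(\cdot,s)\|_{L^{r/2}(\R^3)}\le C\|V(\cdot,s)\|_{L^r(\R^3)}^2$, legitimate since $P=\tilde P$ by hypothesis and $2<r<\infty$; H\"older with the dual pair $(r/2,r/(r-2))$ together with the same volume count produces the same $R^{2-9/r}$ scaling. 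When $3\le r<9/2$ the prefactor itself decays; at the endpoint $r=9/2$ the exponent vanishes, and one instead uses $\int_0^\sz\|V\|_{L^{9/2}(A_R)}^3\,ds\to 0$ by dominated convergence to close.

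With $J_2(R),J_3(R)\to 0$ and $J_1(R)\le 0$, the conclusion splits on the sign of the LHS coefficient of \eqref{pr.id}. In case (i) ($\a>3/2$) this coefficient is positive; dropping the favourable $J_1$ and invoking monotone convergence on the LHS forces $\frac{2\a-3}{2(\a+1)}\|V\|_{L^2((0,\sz);L^2(\R^3))}^2\le 0$, hence $V\equiv 0$. In case (ii) ($-1<\a<3/2$) the coefficient is negative, but the extra hypothesis $V\in L^2((0,\sz);L^2(\R^3))$ makes $\int_0^\sz\int_{\R^3}|V|^2\si\,dy\,ds\to\|V\|_{L^2L^2}^2$ by MCT and $|J_1(R)|\le C\int_0^\sz\int_{A_R}|V|^2\,dy\,ds\to 0$ by DCT, so \eqref{pr.id} reduces to $\frac{3-2\a}{2(\a+1)}\|V\|_{L^2L^2}^2=0$, again giving $V\equiv 0$. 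The main obstacle I anticipate is the pressure term $J_3$: this is where the representation $P=\tilde P$ is essential for Calder\'on--Zygmund, and the matching of the $J_2$- and $J_3$-scalings pins down the upper bound $r\le 9/2$, the endpoint being borderline since the explicit $R^{2-9/r}$ decay disappears and must be supplied by the dominated-convergence refinement above.
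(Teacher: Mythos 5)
Your proposal is correct and follows essentially the same route as the paper: the same localized energy identity obtained by testing \eqref{dse} with $V\sigma_R$, the sign observation $y\cdot\nabla\sigma_R\le 0$, the Calder\'on--Zygmund bound $\|P\|_{L^{r/2}}\le C\|V\|_{L^r}^2$, and the H\"older estimates giving the $R^{2-9/r}$ factor with the annulus integral vanishing by dominated convergence, followed by the same case split on the sign of $(2\alpha-3)/(2(\alpha+1))$. Your use of monotone convergence with a single cutoff in case (i) is just a cosmetic variant of the paper's two-radius ($R_1<R_2$) argument.
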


\begin{proof}
Since $P$ is given by \eqref{P.formula}, by the Calderon-Zygmund
inequality we have
\begin{equation}\label{cz}
\|P(s)\|_{L^q}\leq C_q\|V(s)\|_{L^{2q}}^2\quad \forall q\in (1, \infty),
\forall s\in \R.
\end{equation}
{\em\underline{ The case (i)}: }  Let $\si$ is the cut-off function introduced in the proof of Theorem 1.1.  We multiply (\ref{dse}) by $V\si$, and integrate over $\R^3\times (0, {\sz})$, then from the time periodicity condition and by integration by part we obtain
\begin{eqnarray}\label{th31}
&&\frac{1}{\alpha+1}\left(\alpha-\frac32\right) \int_0 ^{{\sz}} \int_{\R^3} |V|^2 \si dy\,ds
-\frac{1}{2(\alpha+1)}\int_0 ^{{\sz}} \int_{\R^3}|V|^2 y\cdot \nabla\si dy\,ds \n \\
&&\qquad=\frac12  \int_0 ^{{\sz}} \int_{\R^3} |V|^2 V\cdot \nabla \si dy\,ds + \int_0 ^{{\sz}} \int_{\R^3}P V\cdot \nabla \si dy\,ds
\end{eqnarray}
Since $y\cdot \nabla \si \leq 0$ for all $y\in \R^3$, and the first term of the left hand side of (\ref{th31}) is monotone non-decreasing function of $R$, we find that
\begin{eqnarray}
&&\frac{1}{\alpha+1}\left(\alpha-\frac32\right) \int_0 ^{{\sz}} \int_{\R^3} |V|^2 \s_{R_1} dy\,ds \n \\
&&\leq \frac12  \int_0 ^{{\sz}} \int_{\R^3} |V|^2 V\cdot \nabla \s_{R_2} dy\,ds + \int_0 ^{{\sz}} \int_{\R^3}P V\cdot \nabla \s_{R_2} dy\,ds\n \\
&&:=I_1+I_2
\end{eqnarray}
for all $0<R_1<R_2<\infty$. Passing $R_2 \to \infty$, one has
\begin{eqnarray*}
I_1 &\leq & \frac{\|\nabla \s\|_{L^\infty}}{2R_2}
\int_0 ^{{\sz}} \int_{R_2<|y|<2R_2} |V|^3 dy\,ds\n \\
&\leq & \frac{\|\nabla \s\|_{L^\infty}}{2R_2}
\int_{0}^{{\sz}}\left(\int_{R_2<|y|<2R_2} |V|^{r} dy\right)^{\frac 3r} \left(\int_{R_2<|y|<2R_2}  \, dy\right)^{1-\frac3{r}} ds \n \\
&\leq& C R_2^{2-\frac 9r}\int_0 ^{{\sz}} \|V(s)\|_{L^{r} ( R_2 <|y|< 2R_2)} ^3 ds \to 0,
\end{eqnarray*}
and
\begin{eqnarray*}
I_2&\leq& \frac{\|\nabla \s\|_{L^\infty}}{2R_2} \int_0 ^{{\sz}}\int_{R_2<|y|<2R_2} |V| |P|  dy\,ds\n \\
&\leq & \frac{\|\nabla \s\|_{L^\infty}}{2R_2} \int_0 ^{{\sz}}\left(\int_{R_2<|y|<2R_2}|V|^{r} dy\right)^{\frac 1r}\left(\int_{\R^3}|P|^{\frac r2} dy\right)^{\frac 2r}\left(\int_{R_2<|y|<2R_2}  \, dy\right)^{1-\frac3r} ds\n \\
&\leq& C  R_2^{2-\frac 9r} \int_0 ^{{\sz}} \|V\|_{L^{r} (R_2 <|y|<R_2)}\|V\|_{L^{r}}^2 ds\n \\
&\leq& C  R_2^{2-\frac 9r} \left( \int_0 ^{{\sz}} \|V\|_{L^{r} (R_2 <|y|<R_2)}^3 ds\right)^{\frac13} \left(\int_0 ^{{\sz}} \|V\|_{L^{r}}^{3} ds\right)^{\frac23}\to 0,
\end{eqnarray*}
where we used (\ref{cz}). Therefore, we have
\[ \left(\alpha-\frac32\right) \int_0 ^{{\sz}} \int_{\R^3} |V|^2 \s_{R_1} dy\,ds=0 \]
for all $R_1>0$.  This shows that $V=0$ on $\R^3\times (0, {\sz})$.\\

\noindent{\em \underline{The case (ii)}:}  In this case from (\ref{th31}) we have
\begin{eqnarray}\label{th32}
&&\frac{1}{\alpha+1}\left|\alpha-\frac32\right| \int_0 ^{{\sz}} \int_{\R^3} |V|^2 \si dy\,ds
 \leq \frac{1}{2(\alpha+1)}\int_0 ^{{\sz}} \int_{\R^3}|V|^2 | y\cdot \nabla\si|dy\,ds\n \\
&&\qquad+\frac12  \int_0 ^{{\sz}} \int_{\R^3} |V|^2 |V\cdot \nabla \si| dy\,ds + \int_0 ^{{\sz}} \int_{\R^3}|P| | V\cdot \nabla \si| dy\,ds\n \\
&&\qquad :=J_1+J_2+J_3.
\end{eqnarray}
From the above computations we know that $ |J_2|+|J_3|\to 0$ as $R\to \infty$.
For $J_1$ we estimate easily
\[
|J_1|\leq \frac{C}{R}\int_0 ^{{\sz}} \int_{R<|y|<2R}   |V|^2 |y|dy\,ds\leq C \int_0 ^{{\sz}} \|V(s)\|_{L^2 (R<|y|<2R)}^2 ds \to 0
\]
as $R\to \infty$. Hence $\int_0 ^{{\sz}} \int_{\R^3} |V|^2  dy\,ds=0$, and $V=0$ on $\R^{3+1}$.
\end{proof}

The next result, Theorem \ref{th:4}, is an extension of Chae-Shvydkoy
\cite[Theorem 3.2]{ChaShv} to the case of discretely self-similar
solutions.
An important role is played by the following lemma, which extends the
local energy inequality in \cite{ChaShv}.

\begin{lem}
\label{lemma1}
Suppose $(V,P)\in C^1_{loc}(\R^{3+1})$ is a time periodic solution of
\eqref{dse} with period $\sz>0$. Let
$\lambda=e^{b\sz}>1$. For $-\I<s_1<s_2 < \I$, let $l_j = e^{bs_j}$ and
\begin{equation}
\label{I12.def}
I_j= l_j^{2\alpha-3} 
\int_0^{\sz} \int_{\R^3} |V(y,s_j+\tau)| ^2
\sigma(e^{-b(s_j+\tau)} y)dy\,d\tau, \quad (j=1,2).
\end{equation}
We have
\begin{equation}
\label{I12.est}
l_j^{2\alpha-3}\int_0^{\sz} \int_{|y|\le \frac 12 l_j} |V(y,\tau)| ^2 dy\,d\tau\le
I_j \le l_j^{2\alpha-3} \int_0^{\sz} \int_{|y|\le \lambda l_j}
|V(y,\tau)| ^2 dy\,d\tau
\end{equation}
for $j=1,2$, and for some constant $C=C(\sz)$
\begin{equation}
\label{I3.est}
|I_1-I_2| \le C \int_0^{\sz} \int_{\frac12 l_1 \le |y| \le \lambda
  l_2} |y|^{2\alpha-4}(|V|^3+|PV|)(y,s)\, dy\,ds.
\end{equation}
\end{lem}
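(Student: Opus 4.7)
The plan is to view $I_j$ as a time‑integrated local energy of $V$ against the scale‑varying cutoff $\phi(y,s):=\sigma(e^{-bs}y)$, derive a balance law from \eqref{dse}, and express the difference $I_1-I_2$ as an integral of the resulting flux. The crucial structural observation is that $\phi$ is transported by the self‑similar scaling flow:
\[
\partial_s\phi + b\,y\cdot\nabla_y\phi = 0.
\]
This identity produces an exact cancellation in the local energy balance that eliminates the drift term $b(y\cdot\nabla)V$ of \eqref{dse}.

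For \eqref{I12.est}, I substitute $s=s_j+\tau$ to rewrite $I_j = l_j^{2\alpha-3}\int_{s_j}^{s_j+\sz}\int_{\R^3}|V(y,s)|^2\phi(y,s)\,dy\,ds$. The support and value structure of $\sigma$ gives $1_{|y|\le l_j/2}\le \phi(y,s)\le 1_{|y|\le \lambda l_j}$ uniformly in $s\in[s_j,s_j+\sz]$ (since $e^{bs}$ then ranges in $[l_j,\lambda l_j]$). The $\sz$‑periodicity of $|V|^2$ shifts the $s$‑range back to $[0,\sz]$, yielding the sandwich bound.

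For \eqref{I3.est}, I multiply \eqref{dse} by $V\phi$ and integrate by parts in $y$ over $\R^3$. The integration by parts of $b(y\cdot\nabla)V\cdot V\phi$ produces $-\tfrac{3b}{2}\int|V|^2\phi - \tfrac{b}{2}\int|V|^2 y\cdot\nabla\phi$, and the transport identity above makes the second piece cancel exactly against $\tfrac12\int|V|^2\partial_s\phi$ arising from the time derivative of the cutoff. The cubic and pressure terms integrate by parts using $\mathrm{div}\,V=0$, leaving the ODE
\[
e'(s)+\mu\,e(s)=g(s),\qquad \mu:=\frac{2\alpha-3}{\alpha+1},
\]
where $e(s):=\tfrac12\int|V|^2\phi\,dy$ and $g(s):=\tfrac12\int|V|^2 V\cdot\nabla\phi\,dy + \int PV\cdot\nabla\phi\,dy$ is supported in the annulus $\{e^{bs}\le |y|\le 2e^{bs}\}$ on which $|\nabla\phi|\sim 1/|y|$.

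Setting $E(s):=e^{\mu s}e(s)$ so that $E'(s)=e^{\mu s}g(s)$, and noting $l_j^{2\alpha-3}=e^{\mu s_j}$, one obtains $I_j = 2\int_0^\sz e^{-\mu\tau}E(s_j+\tau)\,d\tau$, and hence
\[
I_1-I_2 = -2\int_0^\sz e^{-\mu\tau}\int_{s_1+\tau}^{s_2+\tau}e^{\mu s}g(s)\,ds\,d\tau.
\]
On the support of $\nabla\phi$, $e^{\mu s}=(e^{bs})^{2\alpha-3}\sim |y|^{2\alpha-3}$, so $e^{\mu s}|g(s)|\le C\int_{e^{bs}\le |y|\le 2e^{bs}}|y|^{2\alpha-4}(|V|^3+|PV|)\,dy$. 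Substituting, exchanging the order of $(\tau,s)$-integration (the $\tau$-integral contributes a bounded factor $\le\sz$), and then by Fubini in $(y,s)$ using the $\sz$-periodicity of $V$ and $P$ to replace the $s$-integral at each fixed $y$ (whose length is at most $(\log 2)/b$) by $\int_0^\sz$ at the cost of an absolute constant $C(\sz)$, yields \eqref{I3.est}. The main obstacle is the careful bookkeeping of scaling factors to verify the cancellation $\partial_s\phi+by\cdot\nabla_y\phi=0$ that removes the drift term from the energy balance, and the passage from $e^{\mu s}$ to the weight $|y|^{2\alpha-3}$ on the annular support of $\nabla\phi$; the rest is a routine Fubini‑and‑periodicity argument.
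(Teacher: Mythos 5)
Your proposal is correct and is essentially the paper's own argument: testing \eqref{dse} with the transported cutoff $\phi(y,s)=\sigma(e^{-bs}y)$ (whose transport identity $\partial_s\phi+b\,y\cdot\nabla\phi=0$ kills the drift term) is exactly the paper's step of testing the Euler equations with the fixed cutoff $\sigma(x)v$ in physical variables and then changing to similarity variables, and your subsequent $\tau$-averaging with weight $e^{-\mu\tau}$, the conversion $e^{\mu s}\sim|y|^{2\alpha-3}$ on the support of $\nabla\phi$, and the periodicity argument over an $s$-interval of length at most $(\log 2)/b$ coincide with the paper's treatment of its flux term $I_3$. The only slip is that $\nabla\phi(\cdot,s)$ is supported in $\{\tfrac12 e^{bs}\le|y|\le e^{bs}\}$ (equivalently $|y|\le e^{bs}\le 2|y|$), not $\{e^{bs}\le|y|\le 2e^{bs}\}$; with this correction the resulting $y$-range is precisely $\tfrac12 l_1\le|y|\le\lambda l_2$, as stated in \eqref{I3.est}.
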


Note that $\lambda$ is the factor for discrete self-similarity, see
\eqref{1.6}.

\begin{proof}
Let $\sigma(x)$ be a radial function with $\sigma\ge 0$, $\sigma(r)=1$
for $r<1/2$ small and $\sigma(r)=0$ for $r\ge 1$.  Let $t_j =
-e^{-bs_j}$, $j=1,2$.  Testing the Euler equation with $\sigma v$ in
${\R}^3 \times (t_1,t_2)$ we get
\begin{equation}
\int |v(x,t_2)| ^2 \sigma(x)dx  - \int |v(x,t_1)| ^2 \sigma(x)dx
= \int_{t_1}^{t_2} \int (|v|^2+2p)v \cdot \nabla \sigma(x)dx\,dt.
\end{equation}
In self-similar variables \eqref{1.4}--\eqref{1.5} it becomes
\begin{align}
\notag
&e^{(2a-3b)s_2}\int |V(y,s_2)| ^2 \sigma(e^{-bs_2} y)dy - e^{(2a-3b)s_1}\int |V(y,s_1)| ^2 \sigma(e^{-bs_1} y)dy \\
&\quad = \int_{s_1}^{s_2} e^{(3a-3b-1)s} \int(|V|^2+2P)V \cdot
\nabla \sigma(e^{-bs}y)dy\,ds.
  \end{align}

Assume now that $v$ is DSS, so that $V(y,s)$ is periodic in $s$ with
period ${\sz}>0$.

Replacing $s_j$ by $s_j+\tau$,  dividing by $e^{(2a-3b)\tau}$,
and integrating over $\tau\in [0,{\sz}]$, we get
\begin{equation}
\label{I123}
I_1 - I_2 = I_3
\end{equation}
where $I_1$ and $I_2$ are given in \eqref{I12.def}, and
\begin{equation}
I_3 = \int_0^{\sz} e^{-(2a-3b)\tau}\int_{s_1+\tau}^{s_2+\tau}
e^{(3a-3b-1)s} \int(|V|^2+2P)V\cdot \nabla \sigma(e^{-bs}y)dy\,ds\,d\tau.
\end{equation}

The estimate \eqref{I12.est} for $I_1$ and $I_2$ is because that
$\sigma (e^{-b(s_j+\tau)}y)$ is supported in $\frac 12 l_j \le |y| \le
\lambda l_j$, and also using the periodicity.

For $I_3$, since $e^{-(2a-3b)\tau}\le C$ and $\sigma(e^{-bs}y)$ is
supported in $\frac 12 e^{bs}\le |y| \le e^{bs}$,
\begin{equation}
|I_3| \le C  \int_{E} \int_0^{\sz} \int_{s_1+\tau}^{s_2+\tau}
e^{(3a-3b-1)s} Q(y,s) |\nabla \sigma|(e^{-bs}y)ds\,d\tau dy.
\end{equation}
where $E$ denotes the spatial region $E=\{y:\frac 12 l_1\le |y|\le
\lambda l_2\}$ and $Q=|V|^3+|P||V|$.  If we denote by $f(y,s)$ the
integrand, the inner integral
\begin{equation}
\int_0^{\sz} \int_{s_1+\tau}^{s_2+\tau} f(y,s) ds\,d\tau \le
{\sz} \int_{s_1}^{s_2+{\sz}} f(y,s) ds \le
{\sz} \int_{J_y} f(y,s) ds
\end{equation}
where we have used $\sigma(e^{-bs}y)$ is supported in the time interval
\begin{equation}
J_y = \{s: |y|\le e^{bs} \le 2|y|\} = \{s: \frac {\ln |y|}b \le s \le
\frac {\ln |y|}b + \frac {\ln 2}b \}.
\end{equation}
In $J_y$ we have $e^{(3a-3b-1)s} \le C |y|^{\frac {3a-3b-1}b}=C |y|^{2\alpha-4}$. Thus
\begin{equation}
|I_3| \le C \int_E \int_{J_y} |y|^{2\alpha-4}Q(y,s) ds\,dy.
\end{equation}
Let $k$ be the positive integer so that $(k-1){\sz} <\frac {\ln 2}b \le
k{\sz}$. Using the periodicity of $Q(y,s)$ in $s$,
\begin{equation}
|I_3| \le Ck \int_E \int_0^{\sz} |y|^{2\alpha-4}Q(y,s) ds\,dy.
\end{equation}
This shows \eqref{I3.est}.
\end{proof}

\begin{thm}
\label{th:4}
Suppose $(V,P)\in C^1_{loc}(\R^{3+1})$ is a time periodic solution of
\eqref{dse} in $\R^{3+1}$ with period $\sz>0$, $V \in
L^p(\R^3 \times ([0,\sz])$ for some $3 \le p \le \I$, and $P$ is given
by \eqref{P.formula}. If $-1< \alpha \le 3/p$ or $3/2< \alpha < \I$,
then $V = 0$ on $\R^{3+1}$.
\end{thm}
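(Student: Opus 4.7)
The plan is to apply the local energy identity of Lemma~\ref{lemma1} at two scales $l_1<l_2$ and to drive the right-hand side of \eqref{I3.est} to zero in a suitable limit; combined with the lower bound in \eqref{I12.est}, this will force the ball-energy $E(r):=\int_0^{\sz}\!\int_{|y|\le r}|V|^2\,dy\,d\tau$ to vanish for every $r>0$, and its monotonicity in $r$ will then give $V\equiv 0$ on $\R^3\times[0,\sz]$ and hence on all of $\R^{3+1}$ by periodicity.

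The workhorse technical step is a weighted annular estimate, derived from the Calder\'on--Zygmund bound \eqref{cz} (so $\|P(s)\|_{L^{p/2}}\le C\|V(s)\|_{L^p}^2$), H\"older's inequality in space and time, and dyadic decomposition:
\[
\int_0^{\sz}\!\int_{R_1\le|y|\le R_2}|y|^{2\a-4}(|V|^3+|PV|)\,dy\,ds \le C\,R_*^{\,2\a-1-9/p}\,\|V\|_{L^p(\{R_1\le|y|\le R_2\}\times[0,\sz])}^{\,3},
\]
where $R_*$ is whichever endpoint dominates the geometric sum over shells (the inner $R_1$ if $2\a-1-9/p<0$, the outer $R_2$ otherwise). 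The threshold $\a=\tfrac12+\tfrac9{2p}$ sits strictly above $3/p$, so in the first hypothesis range the inner endpoint dominates, and for the second range the outer endpoint dominates in a way that makes the limit chosen below decay.

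In the range $-1<\a\le 3/p$ I fix $l_1=R$ and send $l_2\to\infty$. H\"older gives $I_2\le Cl_2^{2\a-6/p}\|V\|_{L^p}^2\to 0$ (the endpoint $\a=3/p$ requires additionally the absolute continuity of the $L^p$ norm on exterior regions). Passing to the limit in \eqref{I3.est} and invoking the annular estimate with $R_*=R/2$ yields $I_1(R)\le CR^{\,2\a-1-9/p}\|V\|_{L^p(|y|\ge R/2)}^{\,3}$, and coupled with \eqref{I12.est} this gives
\[
E(R/2)\le CR^{\,2-9/p}\|V\|_{L^p(|y|\ge R/2)}^{\,3},
\]
whose right-hand side vanishes as $R\to\infty$. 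In the range $\a>3/2$ I reverse the direction: fix $l_2$ and send $l_1\to 0$. The local boundedness of $V$ (from $V\in C^1_{loc}$) together with $\a>0$ gives $I_1\le Cl_1^{2\a}\to 0$, and since $2\a-4>-1$ the weight is locally integrable, so the limit of the right-hand side of \eqref{I3.est} is finite. The annular estimate (now with $R_*$ the outer endpoint) together with \eqref{I12.est} produces $E(l_2/2)\le Cl_2^{\,2-9/p}\|V\|_{L^p}^3$, which again tends to zero as $l_2\to\infty$.

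The most delicate point is the weighted annular inequality itself: because $|y|^{2\a-4}$ is nowhere globally in $L^q(\R^3)$, the trade between the weight and $\|V\|_{L^p}$ must be done shell by shell, with the sign of $2\a-1-9/p$ dictating which endpoint dominates. The threshold situations (particularly $p=9/2$, $\a=3/p$, and $p=\infty$) are handled either by the absolute continuity of $\|V\|_{L^p(|y|\ge R)}\to 0$ as $R\to\infty$, or, in the $L^\infty$ case, by the direct integrability of $|y|^{2\a-4}$ permitted by the sign of $\a$ in the corresponding hypothesis range.
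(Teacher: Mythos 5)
Your overall scheme---compare the two scale-invariant local energies $I_1,I_2$ of Lemma \ref{lemma1}, kill one of them in a limit of scales, and control the flux term \eqref{I3.est} by H\"older on dyadic shells together with the Calder\'on--Zygmund bound \eqref{cz}---is exactly the route the paper intends: its own proof consists of invoking the argument of Chae--Shvydkoy (their Theorem 3.2) with Lemma \ref{lemma1} in place of the local energy identity and a factor $\int_0^{\sz}ds$ inserted in every spatial integral. For $3\le p\le 9/2$ your two limits do close the argument (modulo the repairable slip noted below), and the shell computation giving the exponent $2\a-1-9/p$ is correct.

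The genuine gap is that the theorem is asserted for all $3\le p\le\I$, while your proof cannot reach $9/2<p\le\I$. Both branches end with a bound of the form $E(R/2)\le C R^{2-9/p}\|V\|_{L^p(\{|y|\ge R/2\}\times[0,\sz])}^{3}$ (respectively $E(l_2/2)\le Cl_2^{3-2\a}+Cl_2^{2-9/p}\|V\|_{L^p}^3$ after the fix below), where $E(r)=\int_0^{\sz}\int_{|y|\le r}|V|^2dy\,d\tau$, and you claim the right side vanishes as the outer scale tends to infinity. For $p>9/2$ the exponent $2-9/p$ is positive, so the prefactor blows up; absolute continuity of the tail norm gives no rate, so the product is indeterminate, and for $p=\I$ there is no tail decay at all: in your first branch with $p=\I$, $-1<\a\le 0$, the sharpest bound your estimates yield is $E(R/2)\le CR^{3-2\a}\cdot R^{2\a-1}\|V\|_{L^\I}^3=CR^2\|V\|_{L^\I}^3$, which gives nothing, and in the second branch with $p=\I$, $\a>3/2$, the exterior weight $|y|^{2\a-4}$ is not integrable and one again ends with a growing bound. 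Moreover at $p=\I$ the estimate \eqref{cz} is unavailable ($q=\I$ is excluded), so the $|PV|$ part of the flux cannot be handled as you propose. A smaller, fixable slip: for $\a>3/2$ the outer endpoint dominates the shell sum only when $\a>\tfrac12+\tfrac9{2p}$, and this fails for $3\le p<9/2$, $3/2<\a<\tfrac12+\tfrac9{2p}$; the remedy is to split at a fixed radius, use local boundedness of $V$ and of $P$ near the origin (the latter needs a short argument from \eqref{P.formula}), and accept the harmless extra term $Cl_2^{3-2\a}$. So as written your argument proves the statement only for $3\le p\le 9/2$; whatever device the cited Chae--Shvydkoy proof uses for $p>9/2$ (in particular $p=\I$) is the missing ingredient, and scale comparison plus H\"older and \eqref{cz} alone cannot supply it.
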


\begin{proof}
The proof of \cite[Theorem 3.2]{ChaShv} goes through with the help of
Lemma \ref{lemma1}.  One adds the temporal integral $\int_0^{\sz}ds$
in front of every spatial integral in its proof.
\end{proof}

\bibliographystyle{habbrv}
\bibliography{euler-biblio}

\end{document}